\newtheorem{thm}{Theorem}[section]
\newtheorem{cor}[thm]{Corollary}
\newtheorem{lem}[thm]{Lemma}
\newtheorem{prop}[thm]{Proposition}
\newtheorem*{mthm}{Main Theorem}
\theoremstyle{definition}
\newtheorem{dfn}[thm]{Definition}
\newtheorem{rem}[thm]{Remark}
\newtheorem{notation}[thm]{Notation}
\newtheorem{fact}[thm]{Fact}
\newtheorem{axiom}{Axiom}
\newtheorem{axiomsc}[axiom]{Axiom scheme}
\newtheorem*{convention}{Convention}
\numberwithin{equation}{section}
\DeclareMathOperator{\cMe}{c}
\newcommand{\fp}[1]{\left[#1\right]}
\newcommand{\ip}[1]{\left\lfloor#1\right\rfloor}
\newcommand{\NN}{\mathbbm{N}}
\newcommand{\ZZ}{\mathbbm{Z}}
\newcommand{\QQ}{\mathbbm{Q}}
\newcommand{\QQc}{\mathbbm{Q}^{\cMe}}
\newcommand{\RR}{\mathbbm{R}}
\newcommand{\Za}[1][\alpha]{\mathcal{Z}_{#1}}
\newcommand{\Ta}{\mathcal{T}_\alpha}
\newcommand{\Tnalg}{\mathcal{T}_{\nalg}}
\DeclareMathOperator{\nalg}{nalg}
\DeclareMathOperator{\Th}{Th}
\begin{document}


\title[Additive integers with a function for a Beatty sequence]{Model-completeness and decidability of the additive structure of integers expanded with a function for a Beatty sequence}

\author[M. Khani]{Mohsen Khani}
\address{Department of Mathematical Sciences\\ Isfahan University of Technology\\ Isfahan \\84156-83111, Iran}
\email{mohsen.khani@iut.ac.ir}
\author[A. N. Valizadeh]{Ali N. Valizadeh}
\address{School of Mathematics\\ Institute for Research in Fundamental Sciences (IPM)\\Tehran \\
	19395-5746, Iran  }
\email{valizadeh.ali@ipm.ir}
\author[A. Zarei]{Afshin Zarei}
\address{Department of Mathematical Sciences\\ Isfahan University of Technology\\ Isfahan \\84156-83111, Iran}
\email{afshin.zarei@math.iut.ac.ir}

\date{\today}

\begin{abstract}
		
			We introduce a  model-complete  theory which completely axiomatizes the structure $\Za=\langle \ZZ,+,0,1,f\rangle$ where 
		$f:x\mapsto \lfloor \alpha x\rfloor$ is a unary function with $\alpha$ a fixed transcendental number.
When $\alpha$ is computable, our theory is recursively enumerable, and hence decidable as a result of completeness.
		Therefore, this result fits into the more general theme of adding traces of multiplication
		to integers
		without losing decidability. 

	\end{abstract}

\subjclass[2020]{Primary 03B25; Secondary 03C10, 11U09, 11U05}

\keywords{Additive Group of Integers, Beatty sequence, Kronecker's lemma, Decidability, Model-completeness}

\maketitle

	\section{Introduction}
The subject of this study is the structure $\Za=\langle \ZZ,+,0,1,f\rangle$ which contains the integer addition together with a \textit{trace} of multiplication; namely the function $\ip{\alpha x}$ whose range is the Beatty sequence with modulo $\alpha$.

\par 
Our results lie in the intersection of two active research programs.
On the one hand, it relates to the recent works on  
decidability of the expansions of $\langle \ZZ, +\rangle$ as well as their classification either as stable structures, as in
\cite{conant} and \cite{conant-pillay}, or
unstable structures as in \cite{kaplan}.
In this sense, $\Za$ is   an instance of an unstable yet decidable expansion of $\langle \mathbb{Z},+\rangle$.
\par 
On the other hand, $ \Za $ is definable in
the structure $ \mathcal{R}_\alpha=\langle \RR,<,+,0,\ZZ,\alpha\ZZ\rangle$ which lies 
in the more general theme of research studying the expansions of 
real line with specific discrete additive subgroups.  
	Most relevant to our work is Hieronymi's theorem in \cite{Hieronymi-ExpansionByTwoDiscrete} which shows, for the special case
	of a quadratic $ \alpha $, that the theory of the structure $ \mathcal{R}_\alpha$---and as a result
$\Za$---is decidable. Decidability is proved there by showing that $\mathcal{R}_\alpha$	is definable in the monadic second-order structure $\langle \NN,P(\NN),\in , x\mapsto x+1\rangle$ which was already known to be decidable (\cite{Buchi-OnDecidable}).

\par 	
The results on $\mathcal{Z}_\alpha$ have been recently generalized in several directions. In particular, it is shown in \cite{Hieronymi-SturmianWords} that the common theory of the structures $ \mathcal{Z}_\alpha $ is decidable when $ \alpha $ ranges over irrational numbers less than one. A main feature of the proof appeared there is designing a B\"{u}chi automaton which can perform addition over Ostrowski numeration systems.

\par 

The current study is  along similar lines to
the result by
the first and  third authors in \cite{af-moh}
where they applied only simple techniques of model theory
to provide
an alternative proof for the
decidability of $\Za$ in the case that $ \alpha $ is the golden ratio.
Here, by applying elementary tools in model theory and number theory, we prove that the theory of $ \Za $ is decidable when $ \alpha $ is a computable transcendental number. To our knowledge, in the latter case (transcendental $ \alpha $) nothing is known about the decidability of the more general structure $ \mathcal{R}_\alpha $, and we believe that our result forms an important step towards solving that problem as well.

We will discuss the definable sets of $ \Za $ later in Subsection \ref{subdefinable}. But, to have a general picture of the model theory involved in $ \Za $, note in particular that $ \Za $ is a model of the theory of $\ZZ$-groups, or Presburger arithmetic without  order, and hence it defines all congruence classes or arithmetic progressions. The latter  are a typical  example of definable sets in $ \Za $ with a somewhat ``structured'' nature.

On the other hand, there are definable sets in $ \Za $ with a ``random'' behaviour, and these sets are typically the subsets of $ \Za $ defined using the powers of the function $ f $. An aspect of this randomness is reflected in the fact that these sets do not contain an infinite 
arithmetic progression (see Subsection \ref{subdefinable} for more details). However, this random behaviour is actually a consequence of definability of a linear order which turns out to be dense by means of Kronecker's approximation lemma (Fact \ref{Krock}).

When applied to the simple case of an irrational number $ \alpha $, Kronecker's lemma says that the set of decimal parts of the sequence $ \{\alpha n:n\in\NN\} $ is dense in the unit interval $ (0,1) $.

However, as it will be expanded later, we need the full strength of Kronecker's lemma. In fact, when $ \alpha $ is transcendental, all different powers of $f$ take an independent part in creating the random behaviour; this is due to independence of all different powers of $ \alpha $ acting underneath the language. This in fact contrasts the case of a quadratic $ \alpha $ where this randomness is actually weaker and dependent only on the first power of $\alpha$, mainly because all terms of the language reduce to a linear form; for example, $ f^2(x) $ is equal to $ f(x)+x-1 $ when $ \alpha $ is the golden ratio (see \cite{af-moh}).
\par 
The following motivating propositions--that won't directly be used in the paper--reflect 
the basic idea on which most of our arguments rely. The idea is
to solve a given  system of equations in $\Za$  by turning, whenever possible, equations into
inequalities in terms of the decimal parts.
The solution to the system will then be obtained by
 an application of $k$-dimensional Kronecker's lemma, Fact \ref{Krock}, asserting the density of  the set $\{([\beta_1 n],\ldots,[\beta_k n])\}_{n\in \mathbb{N}}$
in $(0,1)^k$ for a 
$\QQ$-linear independent set of  real numbers 
$\beta_1,\ldots,\beta_k$.  Recall that the decimal part of a real number $ x $ is denoted by $ \fp{x}, $ namely $ \fp{x}:=x-\ip{x} $. 

\begin{prop}\label{prop-Range}
 Let $ a,b\in \ZZ$:
\begin{itemize}
\item[(1)] $ a $ is in the range of $ f $ if and only if $ \fp{\frac{a}{\alpha}} $ is greater than $ 1-\frac{1}{\alpha} $. Moreover, $ a=f(b) $ implies that $ a=\ip{\frac{b}{\alpha}}+1 $. If $ \alpha $ is positive and less than $ 1 $, then $ f $ is a surjection.

\item[(2)] $f(a+b) = f(a)+f(b)+\ell$ for some $\ell\in\{0,1\}$, where $\ell$ is equal to $0$ if and only if $\fp{\alpha a}+\fp{\alpha b}<1$.
\end{itemize}
\end{prop}

\begin{proof}
	Part (2) is obvious, for part  (1) use the fact that $ a=f(b) $ implies $ \alpha b -1<a<\alpha b. $
\end{proof}

\begin{fact}[Kronecker's approximation lemma, one dimensional]
	\label{Krock-simple}
	The set of decimal parts of the sequence $ \{\alpha n:n\in\NN\} $ is dense in the unit interval $ (0,1) $.
\end{fact}

\begin{prop}\label{prop-Idea}
	The following system of  congruence equations has infinitely many solutions in $\mathbbm{Z}$,
	\begin{align*}
\begin{cases}
		x\stackrel{m}{\equiv}i\\
		f(x)\stackrel{n}{\equiv}j.
	\end{cases}
	\end{align*}
where $x$ is the variable, and $m,n,i,j\in \NN$.
\end{prop}

\begin{proof} 
	It is easy to verify that $ f(x)\stackrel{n}{\equiv}j $ if and only if $ [\frac{\alpha}{n}x]\in(\frac{j}{n}, \frac{j+1}{n}) $. Hence, to solve the system above, it suffices to find $ y\in \ZZ $ such that 
	\[  \fp{\frac{(my+i)\alpha}{n}}\in(\frac{j}{n}, \frac{j+1}{n}). \]
	If $ \frac{j+1}{n}>[\frac{i}{n}\alpha] $, by Fact \ref{Krock-simple} we choose $ y $ such that
	\begin{align*}
		&\fp{\frac{m\alpha}{n}y}<1-[\frac{i}{n}\alpha], \quad\text{and}\\
		&\frac{j}{n}-\fp{\frac{i}{n}\alpha}<\fp{\frac{m\alpha}{n}y}<\frac{j+1}{n}-\fp{\frac{i}{n}\alpha}.
	\end{align*}
	In the case that $ \frac{j+1}{n}<[\frac{i}{n}\alpha] $, again by Fact \ref{Krock-simple} we choose $ y $ such that
	\begin{align*}
		&\fp{\frac{m\alpha}{n}y}>1-\fp{\frac{i}{n}\alpha}, \quad\text{and}\\
		&1+\frac{j}{n}-\fp{\frac{i}{n}\alpha}<\fp{\frac{m\alpha}{n}y}<1+\frac{j+1}{n}-\fp{\frac{i}{n}\alpha}.
	\end{align*}
\end{proof}
We will gradually present a   natural theory $ \mathcal{T}_\alpha $ for 
$ \Za $ by introducing each axiom or axiom scheme
right after
proving the related properties for $ \Za $. 
So, each of the sections below will contain axioms that ensure a partial model-completeness for the final theory $ \Ta $.
In addition, we will justify how incorporating the further assumption of computability for $\alpha$ guarantees the recursive enumerability of our axioms.
We finish by our Main Theorem showing that $ \Ta $ is model-complete and, for a computable $\alpha$,  this suffices for $ \Za $ to be decidable. 
\par 
We find it helpful to give
a summary of our arguments leading to the proof of model-completeness as follows:
\vspace*{4pt}
\begin{itemize}
	\item[Step 1.] We treat certain relations among the decimal parts as first-order $ \mathcal{L} $-formulas (Section \ref{secDecimals}).
	
	\item[Step 2.] We divide systems of equations in $ \mathcal{L} $ into two main categories of non-algebraic (Section \ref{secNonAlg}) and algebraic formulas (Section \ref{secAlg}). 
	
	\item[Step 3.] We use an extended version of Kronecker lemma (Theorem \ref{thm-ExtendedKronecker}) to show that solvability of a system of non-algebraic formulas is equivalent to a quantifier-free $ \mathcal{L} $-formula (Theorem \ref{thm-NAlgModelcomplete}).
	
	\item[Step 4.] For two models $ \mathcal{M}_1\subseteq\mathcal{M}_2 $, we show that the solution, in $ \mathcal{M}_2 $, of an algebraic system involving only a single variable and with parameters in $ \mathcal{M}_1 $, belongs also to $ \mathcal{M}_1 $ (Lemma \ref{lma-Alg-v1}). This implicitly shows that a substructure of a model of $ \Ta $ is closed under taking inverse of different powers of $ f $.
	
	\item[Step 5.] We use a technical trick (Lemma \ref{lma-Alg-technical}) to show that an algebraic system which contains more than two variables reduces to a non-algebraic system of smaller number of variables.

\end{itemize}

Our last section will contain some additional observations and remarks.
\vspace*{7pt}

\begin{convention}\hfil
	\begin{itemize}
		\item[(1)] $\alpha$ is a fixed computable transcendental number. 
		\item[(2)] We will be working in the language $ \mathcal{L}=\{+,-,0,1,f\} $ and unless we state otherwise all formulas are assumed 
		to be in $ \mathcal{L} $. In particular, all axioms and axiom schemes are $ \mathcal{L} $-formulas.
		\item[(3)] 	 When there is no mention of a model or a theory,
		the lemmas and theorems below
		concern the structure 
		$\mathcal{Z}_\alpha$, and hence
		the
		variables and parameters will range over the set of integers $ \ZZ $.
		
		\item[(4)] We occasionally use a finite partial type $ \Gamma(x) $ as a conjunction of $ \mathcal{L} $-formulas as well; that is, we freely use notations like $ \exists x\Gamma(x) $ instead of writing $ \displaystyle{\exists x\bigwedge_{\varphi(x)\in\Gamma(x)}\varphi(x)} $.
		
	\end{itemize}

\end{convention}

\subsection*{A note on overlapping results.}
Short before submitting this paper, we learnt of a similar independent work by 
G\"{u}naydin and \"{O}zsahakyan uploaded in arXiv \cite{gunayden-ozsahakyan}.
The main difference
between the two papers, is that in \cite{gunayden-ozsahakyan}
the authors consider the Beatty sequence as a
predicate in the language, where we
put the function $f=\ip{\alpha x}$,
which is not definable in their structure. For the same reason, we have
to deal with decimals that concern the powers of the function $f$ where
they need only to treat decimals of linear combinations.

\section{Describing  decimals in $\mathcal{L}$}\label{secDecimals} 	
Although our language/theory does not literally contain the decimals, similar observations to Proposition \ref{prop-Range} show that our theory is expressive enough to describe key properties of decimals by capturing their order and dense distribution in the spirit of Fact \ref{Krock}.

\begin{lem}	\label{lma-BasicOperation1}
		Let $ a,b\in\ZZ $ and $ n\in\NN $ with $n\neq 0$.
	\begin{enumerate}

		\item[(1)] 
	There exists $ i\in\{0,\ldots, n-1\} $ such that  
		\[ f(na)=nf(a)+i. \]
	Indeed $f(na)=nf(a)+i$ if and only if  $\frac{i}{n}<\fp{\alpha a}<\frac{i+1}{n} $.
		\item[(2)]
		$[\alpha a]+[\alpha b]<1$ if and only if $ f(a+b)=f(b)+f(b). $

		\item[(3)] 
		$[\alpha a]<[\alpha b]$ if and only if $ f(b-a)=f(b)-f(a) $.				
						
	\end{enumerate}
\end{lem}
\begin{proof}
	Easy to verify. 
\end{proof}	

\begin{axiomsc}[{\footnotesize for various $n\in\NN$ and $0\leq i< n$ with $f(n)\stackrel{n}{\equiv} i$}]\label{ax-cut}
	\[ f(\underbrace{1+\cdots+1}_{n \text{ times}})=\underbrace{f(1)+\cdots+f(1)}_{n \text{ times}}+\underbrace{1+\cdots+1}_{i \text{ times}}. \]
\end{axiomsc}
\begin{rem}\label{rem-prime-computability-cut}\hfill
	\begin{itemize}

	\item[(1)] We can use Axiom scheme \ref{ax-cut} together with part (2) of Axiom \ref{axBasicDLO} below to precisely determine in our theory the value of $f(n)$ for every $n\in\ZZ $. This property is crucial for our theory in order to have $\Za$ as its prime model, and the latter will, in turn, be required for proving the completeness of our theory $\Ta$.
	
	\item[(2)] 	By part (1) of Lemma \ref{lma-BasicOperation1}, we see that the rational cut of $\alpha$ is included in Axiom scheme \ref{ax-cut}. Hence, the computability of $\alpha$ is a necessary condition for this axiom scheme to be recursively enumerable.
	
	\end{itemize}
\end{rem}

\vspace*{1cm}

\begin{lem}\label{lma-BasicOperation2}
	Let $ m,\ell_1,\ell_2\in\ZZ $ and $n\in \NN$.
	\begin{enumerate}
		
	\item[(1)]  There exists a quantifier-free $\mathcal{L}$-formula $ \varphi(x,y) $ such that for all $a,b\in \ZZ $, $\Za\models \varphi(a,b) $ if and only if
	\begin{align}\label{eq-formula-mn}
		\fp{\alpha a} <\frac{m}{n}\fp{\alpha b}.
	\end{align}
		
	\item[(2)]  There exists a quantifier-free $\mathcal{L}$-formula $ \varphi(x,y) $ such that  for all $a,b\in \ZZ$, $\Za\models \varphi(a,b) $ if and only if
		\begin{align}\label{eq-formula-lmn}
			\ell_1<n\fp{\alpha a}+m\fp{\alpha b}<\ell_2.
		\end{align}
	
	\end{enumerate}

\end{lem}
\begin{proof}
	For part (1), first suppose that we have $ 0<m<n $;	other cases can be proved similarly or else they turn into triviality.
	
	Note that there are  $ i , j\in \NN $ with $ 0\leq i<n $ and $ 0\leq j<m $ such that 
	$ \fp{\alpha a}\in(\frac{i}{n},\frac{i+1}{n}) $ and $ \fp{\alpha b}\in(\frac{j}{m},\frac{j+1}{m}) $. The latter conditions are $ \mathcal{L} $-expressible using part (1) of Lemma \ref{lma-BasicOperation1}. Also, observe that non-trivial cases only happen when $ i< m $ and $ j=i $; that is, both $ \fp{\alpha a} $ and $ \frac{m}{n}\fp{\alpha b} $ belong to the same interval $ (\frac{i}{n},\frac{i+1}{n}) $. In this case, using parts (3) and (1) of Lemma \ref{lma-BasicOperation1}, we can show that $ n\fp{\alpha a} < m\fp{\alpha b}$ if and only if $ f(mb-na)=mf(b)-nf(a) $. 
	
	For part (2), note that, depending on whether $ m $ is negative or positive, non-trivial cases occur only when we have that $ -m\leq \ell_1<\ell_2\leq n $ or $ 0\leq \ell_1<\ell_2\leq n+m $, respectively. Then, apply an argument similar to the proof of part (1).
\end{proof}	

The following corollary is a consequence of part (2) of Lemma \ref{lma-BasicOperation2}:
\begin{cor}\label{crl-BasicProperties}
Let $ \ell_1,\ell_2\in \ZZ$, and $ \bar{m} , \bar{n} \in \ZZ^k$. Then, there exists a quantifier-free $ \mathcal{L} $-formula $ \varphi(\bar{x},\bar{y}) $, depending on $k, \ell_1,\ell_2, \bar{m} $ and $ \bar{n} $, such that 
for all $ \bar{a},\bar{b}\in \ZZ^k $,
$ \Za\models  \varphi(\bar{a},\bar{b})$ if and only if 
	\begin{align}\label{eq-MostGeneralExpressible}
		\ell_1<\sum_{i=1}^{k} n_i\fp{\alpha a_i}+\sum_{i=1}^{k}m_i\fp{\alpha b_i}<\ell_2.
	\end{align}
\end{cor}

\begin{notation}\label{not-DecimlFormulas}
	From now on and by injecting suitable variables, we treat various forms of inequalities \eqref{eq-formula-mn}, \eqref{eq-formula-lmn}, \eqref{eq-MostGeneralExpressible} and those appeared in Lemma \ref{lma-BasicOperation1} as first-order $ \mathcal{L} $-formulas. For example, by writing $ \ell_1<n\fp{\alpha x}+m\fp{\alpha y}<\ell_2 $ we mean the quantifier-free $ \mathcal{L} $-formula $\varphi(x,y)$, obtained in part (2) of Lemma \ref{lma-BasicOperation2}.
\end{notation}

Using our observations so far and by applying our flexible notation, we are able to encode into our language some of the main features of the decimals involved in the structure $ \Za $.

\begin{rem}
	Note again that, to a decimal like $ \fp{\alpha a} $ nothing can be designated  in $ \mathcal{L} $. In fact, $ \mathcal{L} $ is capable of describing these decimals merely when they participate in an inequality of the form (\ref{eq-MostGeneralExpressible}).
\end{rem}

In Section \ref{secNonAlg}, 
we deal with axiomatizing the properties of \textit{non-algebraic} formulas, that is, formulas containing decimals of the form $[\alpha f	^i(x)]$ (see Definition \ref{dfn-NonAlgFormula}). A typical example is the formula $ [\alpha x]\in(0,1-\fp{\alpha a}) $ which, according to Axiom scheme \ref{axKronecker} below, is satisfied by infinitely many elements $ x $.

Section \ref{secAlg}, in contrast, axiomatizes the properties of \textit{algebraic} formulas. Here, typical examples include 
formulas like $f(x)=a$ which has finitely many solutions in any model of $ \Th(\Za) $ containing the parameter $ a $ (and indeed a unique solution when $\alpha>1$). 

\section{Non-algebraic formulas}\label{secNonAlg}
	
As mentioned earlier, some of the essential properties of the function $f$ will be described as a consequence of Kronecker's approximation lemma,
and	our aim is to exploit the full extent of
this fact in our axiomatization. A proof of this theorem can be found in \cite[Theorem 442]{Hardy}. 
	
	\begin{fact}[{Kronecker's Approximation  Lemma}]\label{Krock}\hfil
		
Let $ n\in\NN $ and let $\beta_1,\ldots,\beta_n\in \RR$ be 
such that $\beta_1,\ldots,\beta_n,1$ are
linearly independent over $\mathbbm{Q}$.
Then the set
		$\big\{([\beta_1 x],\ldots,[\beta_n x]):{x\in \mathbbm{N}}\big\}$ is dense in $(0,1)^n$.
	\end{fact}
As a direct result of Kronecker's lemma, we can readily verify that \( \Za \) satisfies part (4) of the subsequent axiom. However, it is essential to remember Notation \ref{not-DecimlFormulas}, and in particular, that ``\( \fp{\alpha x} < \fp{\alpha y} \)'' is simply our notation for expressing the formula \( f(y-x) = f(y) - f(x) \). Furthermore, it is straightforward to confirm that \( \Za \) conforms to the remaining three parts of the axiom.
\begin{axiom}\label{axBasicDLO}\hfill
	
		\begin{enumerate}
		\item[(1)] $\forall x\forall y \big(f(x+y)=f(x)+f(y)\ \vee\  f(x+y)=f(x)+f(y)+1\big)$.
		
		\item[(2)] $\forall x\big( f(-x)=-f(x)-1\big)$.
		
		\item[(3)] $\forall y\exists x \big(\bigvee_{i=0}^{f(1)}y+i=f(x)\big)$.
		
		\item[(4)]  The relation $ [\alpha x] <[\alpha y] $ is a dense linear order.
	\end{enumerate}
\end{axiom}
\begin{rem}\label{remBasic}
	Based on our notation, part (1) of Axiom \ref{axBasicDLO} logically implies that 
	for all $ x_1,\ldots, x_n $ we have
	
	\[\bigvee_{j=0}^{n-1} j<\sum_{i=1}^{n}[\alpha x_i]<j+1. \]
\end{rem}
\begin{proof}
With regard to Notation \ref{not-DecimlFormulas}, the formula above is simply another way of expressing the following formula:
\[ \bigvee_{j=0}^{n-1} f(x_1+\cdots+x_n)=f(x_1)+\cdots+f(x_n)+j, \]
which can be demonstrated by $ n $-time applying part (1) of Axiom \ref{axBasicDLO}.
\end{proof}
	
\begin{rem}
	
		An interesting, but not directly relevant, observation
	is that each $n\ZZ$ is dense in $\ZZ$
	with respect to the order relation defined by the decimal parts (Axiom \ref{axBasicDLO}). More generally, given any congruence class
	$ n\ZZ+i $ and any interval 
	$ I=(\fp{\alpha a},\fp{\alpha b})\subset(0,1)$, there are infinitely many integers  $ c\in n\ZZ+i $	such that $ \fp{\alpha c}\in I $. This can be proved 
	using the ideas appeared in the proof of Proposition \ref{prop-Idea}.
\end{rem}
	
	\par 
For given $ a,b\in \ZZ $,  decimals other than $ \fp{\alpha a} $ and $ \fp{\alpha b} $ also intervene in investigating formulas like $ f^n(a+b)=f^n(a)+f^n(b)+\ell $ where the exact value of $ \ell $ is determined by the values of the decimals below:
\[  \fp{\alpha a}, \fp{\alpha b}, \fp{\alpha f(a)}, \fp{\alpha f(b)}, \cdots, \fp{\alpha f^{n-1}(a)}, \fp{\alpha f^{n-1}(b)}.  \]

	To engage with such decimals we aim to expand on the idea of the proof of Proposition \ref{prop-Idea} further as follows: Let $ f^0(x)=x $ and notice that since $\alpha$ is transcendental, each finite sequence $1, \alpha, \ldots, \alpha^{n}$ is $\QQ$-linearly independent. This fact provides us with 
	a more amount of  ``control'' over the decimals of the form $ \fp{\alpha f^i(x)} $ as in the following theorem and the related axiom scheme. 
	 The following theorem concerns the natural numbers and
	the word ``dense'' there has its 
	usual meaning in the reals.
	\begin{thm}[Extended Kronecker's Lemma]\label{thm-ExtendedKronecker}\hfil
		
		For every $ n\in\NN $, the following set of $(n+1)$-tuples is dense in $(0,1)^{n+1}$:
\[ \Big\{\left([\alpha a],[\alpha f(a)],[\alpha f^2(a)],\cdots, [\alpha f^n(a)]\right):a\in \NN\Big\}. \]
	\end{thm}
	\begin{proof}
		For $a\in \NN$,
		note that $ \alpha f(a) =\alpha ^2 a -\alpha\fp{\alpha a}$,
		and an easy induction shows that for every $ k\geq 2 $ we have that 
		\[ \alpha f^k(a) = \alpha ^{k+1}a-\alpha\fp{\alpha^ka}+\alpha (j_1+\cdots+j_{k-1}),\]
		where each $ j_i $ is an integer with $ |j_i|\in\{0,\ldots,\ip{\alpha}\} $ determined by the inequality $ \frac{j_i}{\alpha}<\fp{\alpha^i a}<\frac{j_i+1}{\alpha} $.
	\par
The remainder of the proof is based on the following straightforward observation: Consider \( J_1, \ldots, J_n \) as given subintervals of 
\( (0,1) \). Given any subinterval \( I_0 \) of \( (0,1) \) with sufficiently small length, it is possible to find subintervals \( I_1, \ldots, I_n \) of $(0,1)$ such that for each \( k = 1, \ldots, n \), the cumulative sum \( I_0 + I_1 + \ldots + I_k \) is contained within \( J_k \). Here, \( I + J \) means  the set \( \{\fp{x+y} \mid x \in I, y \in J\} \).
\par 
	Now, suppose that we are to find an \( a\in \NN \) such that \( \fp{\alpha f^i (a)} \) belongs to the specified subinterval \( J_i \) of \( (0,1) \) for each \( i \) in the set \( \{0,\ldots, n\} \). To achieve this, we only need to select a sufficiently small subinterval \( I_0 \) of \( J_0 \), determine intervals \( I_1,\ldots,I_n \) as described earlier, and find an element \( a\in\NN \) such that \( [\alpha^i a] \) belongs to \( I_i \) for each \( i \). The existence of such \( a \) is guaranteed by Kronecker's approximation theorem.	
	\end{proof}
\begin{axiomsc}[{\footnotesize for various $m,n\in\NN$}]\label{axKronecker}\hfil
	
		For every $ \bar{y} $ and $ \bar{z} $ (with $ |\bar{y}|=|\bar{z}|=n $), if 
		$\displaystyle{\bigwedge_{i=1}^n[\alpha z_i]<[\alpha y_i]}$, then
		there exist at least $m$ different elements $x$ such that
		\[
		\bigwedge_{i=1}^n \quad [\alpha y_i]< [\alpha f^i(x)]< [\alpha z_i].
		\]
\end{axiomsc}

\begin{dfn}\label{dfn-NonAlgFormula}\hfil
	
	\begin{itemize}
		\item[(1)] 
		Regarding Notation \ref{not-DecimlFormulas}, for every $ q\in\NN, \ell\in\ZZ $ and tuples of integers $ \bar{m} $ and $ \bar{n} $, any quantifier-free $\mathcal{L}$-formula $ \theta(x;\bar{y}) $ of the form below is called \textit{non-algebraic}:
		\vspace*{3pt}
		\begin{equation}\label{eq-NonAlgOneVar}
			\begin{aligned}
				n_0\fp{\alpha x}+n_1\fp{\alpha f(x)}+\ldots+ &n_k\fp{\alpha f^k(x)} \text{\large$\vartriangleleft$}\\	
				&m_0\fp{\alpha y_0}+\ldots+ m_k\fp{\alpha y_k}+\fp{\alpha q}+\ell,
			\end{aligned}	
		\end{equation}
		
		\vspace*{5pt}
		where $ |\bar{y}|= |\bar{m}|=|\bar{n}| =k+1 $ and $\vartriangleleft$ is either of the symbols ``$<$'' or ``$>$''.
		
		 More generally, define a \textit{non-algebraic} formula $\theta(\bar{x};\bar{y})$ with $ |\bar{x}|=k $ and $ |\bar{y}|=k'+1 $ 
		as a quantifier-free $\mathcal{L}$-formula of the following form
		\vspace*{3pt}
		\begin{align}\label{eq-NonAlg}
		\hspace*{35pt} \sum _{i=0}^{\ell_1}n_{1i}\fp{\alpha f^i(x_1)}+\cdots+\sum _{i=0}^{\ell_k}n_{ki}\fp{\alpha f^i(x_k)}\text{\large$\vartriangleleft$}\sum _{i=0}^{k'}m_i\fp{\alpha y_i}+\fp{\alpha q}+\ell	
		\end{align}
		\vspace*{3pt}
		for some $ \ell_1,\ldots,\ell_k,q\in\NN $ and $\ell,m_i,n_{ji}\in \ZZ$ and $\vartriangleleft$ as above.	
		
		\item[(2)] Let $ \mathcal{M} $ be an $ \mathcal{L} $-structure, $ A\subseteq M $ and $\bar{a}\in M$. The \textit{non-algebraic type} of $\bar{a}$ over $A$ is the partial type consisting of all
		non-algebraic formulas $\theta(\bar{x};\bar{b})$, with $ |\bar{x}|=|\bar{a}| $ and $ \bar{b}\in A $, which are satisfied by $ \bar{a} $ in $\mathcal{M}$.
	\end{itemize}

\end{dfn}
%
%
%

Hence, a given non-algebraic type $ \pi(x;b) $ over a single parameter $ b $ determines, among other things, the value of the numbers  
$\ell_i\in \{0,1\}$
such that 
\begin{align}\label{eq-NonAlgExample}
	\begin{split}
	f(x+b)&=f(x)+f(b)+\ell_1 \\ 
	f^2(x+b)&=f(f(x)+f(b)+\ell_1)=f^2(x)+f(f(b)+\ell_1)+\ell_2 \\
	f^3(x+b)&=f(f^2(x)+f(f(b)+\ell_1)+\ell_2)\\
	&=f^3(x)+f(f(f(b)+\ell_1)+\ell_2)+\ell_3 \\
	& \vdots
	\end{split}
\end{align}	
which are included in $\pi(x;b)$ as the following set of inequalities assuming $ \ell_i=0 $ (respectively $ \ell_i=1 $).	
\begin{align*}
	&[\alpha x]+[\alpha b]<1 &\quad(>1)\\
	&[\alpha f(x)]+[\alpha (f(b)+\ell_1)]<1 &\quad(>1) \\
	&[\alpha f^2(x)]+[\alpha (f(f(b)+\ell_1)+\ell_2)]<1 &\quad(>1)\\
	&\vdots &\vdots .
\end{align*}

Lemma \ref{lma-NonAlgQE} below provides a quantifier-free condition for a non-algebraic type to have a solution. To get the idea of its proof via an example,
 consider an inequality of the form \eqref{eq-NonAlg}:
 	\[ 2\fp{\alpha x}-3\fp{\alpha f(x)}< 4\fp{7\alpha } - 6 \fp{9\alpha}+\fp{8\alpha}+5, \]
 	where $m_0=4, m_1=-6, y_0=7, y_1=9, q=8$, and $\ell = 2$. 
%
	According to Theorem \ref{thm-ExtendedKronecker}, it suffices to find a pair $ (z_0,z_1)\in(0,1)^2 $ satisfying 
	\[ 2z_0-3z_1 < 4\fp{7\alpha } - 6 \fp{9\alpha}+\fp{8\alpha}+5. \]
	
	But, the existence of a solution $ (z_0,z_1)\in(0,1)^2 $ for this inequality is simply equivalent, in $\RR$, to the requirement that
		\begin{align}\label{eq-computability}
		-3<4\fp{7\alpha } - 6 \fp{9\alpha }+\fp{8\alpha}+5 <2.	
		\end{align}
		
This is indeed the Fourier-Motzkin elimination algorithm for solving system of linear inequalities in real numbers. 
\begin{lem}\label{lma-NonAlgQE}
	Suppose that $ \Gamma(x;\bar{y}) $ is a finite set of non-algebraic formulas  each of the form (\ref{eq-NonAlgOneVar}). Then, there exists a quantifier-free formula $ \chi(\bar{y}) $, depending on the numbers $ k, \ell, \bar{n} $ and $ \bar{m} $ appearing in the formulas $ \theta\in\Gamma $, such that 
	\begin{align*}
		\Za\models \forall\bar{y}\left(\exists x\Gamma(x;\bar{y})\leftrightarrow\chi(\bar{y})\right).	
	\end{align*}
\end{lem}
\begin{proof}
First consider the case that $ \Gamma(x;\bar{y}) $ only consists of a single formula $ \theta(x;\bar{y}) $. Let $ w_\theta $ denote the sum of decimals appearing in the right-hand side of inequality (\ref{eq-NonAlgOneVar}). Based on Theorem \ref{thm-ExtendedKronecker}, we can replace each $\fp{\alpha f^i(x)} $ by a new real-valued variable $z_i$ ranging over $(0,1)$. 

Now, working in $\RR$, fix $ w_\theta $ and note that the existence of a real-valued solution $ (z_0,\cdots,z_k)\in(0,1)^{k+1} $ for the linear inequality 
$\sum_{i=0}^{k}n_iz_i<w_\theta+\ell_\theta$
is equivalent to the quantifier-free formula $\chi_{_\theta}$ asserting $s_\theta<w_\theta+\ell_\theta<t_\theta$ where $s_\theta$ and $t_\theta$ are respectively the minimum and the maximum of the summation $\sum_{i=0}^{k}n_iz_i$ with $z_i\in (0,1)$ for each $i$. Note that the value of $s_\theta$ and $t_\theta$ are determined by the coefficients $n_i$.

In the case that $ \Gamma(x;\bar{y}) $ contains more than one formula, we proceed as above by introducing the corresponding real variables $ z_i $ for the decimals $ \fp{\alpha f^i(x)} $. For each of the formulas $ \theta(x;\bar{y})\in\Gamma(x;\bar{y}) $, we need to consider the corresponding $ w_\theta $ as above, and the required quantifier-free formula is the conjunction of the formulas $\chi_{_\theta}$. Note that, for different formulas $\theta,\theta'\in\Gamma$ the corresponding $w_\theta$ and $w_\theta '$ might be interrelated since each $w_\theta $ is a linear combination of multiple variables of the form
$ m_0\fp{\alpha y_0}+\ldots+ m_k\fp{\alpha y_k} $.
\end{proof}

\begin{cor}\label{crl-NonAlgQE}
		Suppose that $ \Gamma(\bar{x};\bar{y}) $ is a finite set of non-algebraic formulas  each of the form (\ref{eq-NonAlg}). Then, there exists a quantifier-free formula $ \chi(\bar{y}) $, depending on the numbers $\ell_1,\ldots,\ell_k, \ell, m_i, n_{ji}$ appearing in the formulas $ \theta\in\Gamma $, such that the following holds in $ \Za $: 
	\begin{align}\label{eq-NonAlgQE}
		\forall\bar{y}\left(\exists\bar{x}\Gamma(\bar{x};\bar{y})\leftrightarrow\chi(\bar{y})\right).	
	\end{align}
\end{cor}
\begin{proof}
	Proceeding as in the proof of Lemma \ref{lma-NonAlgQE}, and replacing each $\fp{\alpha f^i(x_j)} $ appearing in $\Gamma(\bar{x};\bar{y})$ by a new real-valued variable $z_{i,j}$, we can use Fourier-Motzkin algorithm to compute the suitable $\chi(\bar{y})$.
\end{proof}
\begin{axiomsc}\label{axNonAlg}
	All instances of formula (\ref{eq-NonAlgQE}) above when $ \Gamma(\bar{x};\bar{y}) $ and $ \chi(\bar{y}) $ range over all formulas having the properties described in Corollary \ref{crl-NonAlgQE}.
\end{axiomsc}

\begin{rem}\label{rem-computablity-non-alg}
A simple instance of the axiom scheme above is a formula like
\begin{align*}
	\exists x \Big(\fp{\alpha x}<1-\fp{\alpha} \ \wedge \ \fp{\alpha x}<1-\fp{2\alpha}\Big) \quad \leftrightarrow \quad 
	0<\fp{2\alpha}-\fp{\alpha}<1.
\end{align*}
    It is worth noting that the computability of $\alpha$ does not play a role in recursively listing such sentences.
\end{rem}
\begin{notation}\hfil
	
	\begin{itemize}
		\item[(1)] 
		Let $T_0$ denote the theory of $\mathbb{Z}$-groups
		(that is, Presburger arithmetic without order) in the language $ \{0,1,+,-\} $.		
		\item[(2)] 
		Let $ \Tnalg $, reads ``$T$-non-algebraic'', be $T_0$ together with Axiom \ref{axBasicDLO}, Axiom scheme \ref{axKronecker}, and Axiom scheme \ref{axNonAlg}.
	\end{itemize}
	
\end{notation}

\begin{thm}
	\label{thm-NAlgModelcomplete}
	Suppose that $\mathcal{M}_1\subseteq \mathcal{M}_2$ are models of 
	$\Tnalg$ and let $ A\subseteq M_1 $ and $ \bar{a}\in M_2 $. Then, every finite fragment of the non-algebraic type of 
	$ \bar{a} $ over $ A $ is realized in $ \mathcal{M}_1 $.
\end{thm}
\begin{proof}
	According to Axiom scheme \ref{axNonAlg}, the existence of a solution for each finite fragment of the non-algebraic type of $ a $ over $ A $ is equivalent to the satisfaction of a quantifier-free formula $ \chi(\bar{y}) $ by a finite tuple $ \bar{b}\in A $. But, $ \chi(\bar{b}) $ holds in $ \mathcal{M}_2 $ if and only if it holds in $ \mathcal{M}_1 $.

\end{proof}

For an interesting connection of 
$\Tnalg$ to o-minimality, see Subsection \ref{subsecOMin}.

\section{Adding algebraic formulas}\label{secAlg}
In this section, we focus on the most general form of a finite set of $ \mathcal{L} $-formulas by adding formulas of the form $ h(x)=y $ where $ h(x) $ is a term of the form 
\begin{align}\label{eq-Alg-term}
	 \sum_{i=0}^{k}m_if^i(x),
\end{align}
for some $ m_i\in\ZZ $; we will refer to these integers as \textit{coefficients of} $ h(x) $.
Also we call a term of the form \eqref{eq-Alg-term} an \textit{$f$-polynomial.}

\begin{dfn}\label{dfn-Alg}
	An $ \mathcal{L} $-formula $ \varphi(x_1,\ldots,x_n;y) $ is called \textit{algebraic} if it is of the  form 
	\[ h_1(x_1)+\ldots+h_n(x_n)=y, \]
where $ h_1(x_1), \ldots, h_n(x_n) $ are $f$-polynomials.
\end{dfn}

\begin{lem}\label{lma-Alg-diff-negative}
	Suppose that $ h(x) $ is an $f$-polynomial. Then, there exists a minimal $ K_h\in\NN $, depending on coefficients appearing in (\ref{eq-Alg-term}), such that:
	\begin{align}
		&\Tnalg\models \forall x_1\forall x_2 \Big(\bigvee_{j=-K_h}^{K_h}h(x_1+ x_2)=h(x_1)+ h(x_2)+j\Big),\label{eq-Alg-diff}\text{ and }\\
		&\Tnalg\models \forall x\Big(\bigvee_{j=0}^{K_h}h(-x)=-h(x)-j\Big).\label{eq-Alg-negative}
	\end{align}
\end{lem}
\begin{proof}
	Based on part (1) of Axiom \ref{axBasicDLO}, we can use a simple induction on $i\in\NN$ to obtain a minimal positive integer $K_i$ such that for every $x_1$ and $x_2$ we have that 
		\[ \bigvee_{j=0}^{K_i} f^i(x_1+x_2)=f^i(x_1)+f^i(x_2)+j. \]
	Now, given an $f$-polynomial $h(x)$ of the form (\ref{eq-Alg-term}), the required minimal $K_h$ is a natural number--depending on the coefficient appearing in $h(x)$--less than or equal to $ \sum_{i=1}^k |m_i|K_i $. This proves \eqref{eq-Alg-diff}.
	
	A similar argument as above and part (2) of Axiom \ref{axBasicDLO} will work for \eqref{eq-Alg-negative}.
	
\end{proof}

\begin{lem}\label{lma-Alg-bounds}
	Let $h(x)$ be an $f$-polynomial. 
	\begin{itemize}
		\item[(1)] For every $n\in\NN$, there exists  $K_n\in\NN$ such that for every $x\in\ZZ$ we have 
		\[f^n(x)=\ip{\alpha ^n x}-j, \]
		for some $j\in\{0,\ldots, K_n\}$.
		
		\item[(2)] There exist $k_h\in\NN$ and $\beta_h\in\RR$ such that for every $x\in\ZZ$ we have 
		\[ h(x)=\ip{\beta_hx}+j, \]
		for some  $j\in \ZZ$ satisfying $-k_h\leq j \leq k_h$. Moreover, $\beta_h$ is equal to $\sum_{i=0}^km_i\alpha^i$ where the $m_i$'s and $k$ are as in 
		\eqref{eq-Alg-term}.
		
		\item[(3)] There exists  $N_h\in\NN$ such that:
		\begin{align}\label{eq-Alg-range}
			&\Za\models \forall y\exists x\Big(\bigvee_{j=0}^{N_h}h(x)=y+j\Big).
		\end{align}
		
		\item[(4)] For every $\ell\in\NN$, there exists an $\ell'\in\NN$ such that 
		\[
		\Za\models \forall x\forall y\ \Big( 
		\big( \bigvee_{j=-\ell}^{\ell}h(x)-h(y)=j\big) \to \big( \bigvee_{j=-\ell'}^{\ell'}(x-y=j\big)
		\Big).
		\]
	\end{itemize}
\end{lem}
\begin{proof}
	Part (1) can easily be proved using a simple induction on $n$, and the fact that $f^{n+1}(x)=\ip{\alpha f^n(x)}$. Part (2) is also obvious using part (1) and the properties of the floor function.
	
	For part (3), note that the function $\ip{\beta_h x}$ is monotone and unbounded. So, given   $y\in \ZZ$, there exists  $x\in \ZZ$ such that 
	$y$ belongs to the interval with endpoints $\ip{\beta_h x}$ and $\ip{\beta_h (x+1)}$. The length of this interval is at most $\ip{\beta_h}+1$. By part (2), there are integers $j_1,j_2\in\{-k_h,\ldots,k_h\}$ such that $\ip{\beta_h x} = h(x)+j_1$ and $\ip{\beta_h (x+1)} = h(x+1)+j_2$. Hence, we can take $N_h$ to be $\ip{\beta_h}+2k_h+1$.
	
	For part (4), we can use part (2) to conclude that $|\beta_h (x-y)|\leq \ell+2k_h+1$ whenever $|h(x)-h(y)|\leq \ell$. Hence, we can take $\ell'$ to be equal to $\frac{\ell+2k_h+1}{|\beta_h|}$.
\end{proof}

\begin{rem}
	Parts (1) and (2) in the lemma above are far from being expressible in $\mathcal{L}$. However, our theory only needs their particular consequences appearing in parts (3) and (4). It is a good question to ask whether or not these properties are provable merely based on the axioms we have introduced so far. However, answering this question does not seem to be quite straightforward, hence, we choose the easier way of adding parts (3) and (4) to our axioms:
\end{rem}

\begin{axiomsc}\label{ax-RangeOfTerms}
	All instances of formula \eqref{eq-Alg-range} for $h(x)$ and $N_h$ as appeared in part (3) of Lemma \ref{lma-Alg-bounds}.
\end{axiomsc}
\begin{axiomsc}[{\footnotesize For each $h(x)$ and $\ell,\ell'\in\NN$ as in part (4) of Lemma \ref{lma-Alg-bounds}}]\label{ax-Bounds}\hfil
	\[ \forall x_1\forall x_2\Big(\bigvee_{j=-\ell}^{\ell}h(x_2)-h(x_1)=j\ \rightarrow\  \bigvee_{j=-\ell'}^{\ell'}x_2-x_1=j\Big). \]
	
\end{axiomsc}
\begin{rem}\label{rem-computability-h}
	As can be seen in the proof of Lemma \ref{lma-Alg-bounds}, the computability of $\alpha$ is required in calculating the numbers $N_h$ and $\ell'$ above, and hence also, in recursively listing all instances of the Axiom schemes \ref{ax-RangeOfTerms} and \ref{ax-Bounds}.
\end{rem}
\begin{notation}
	Let $ \Ta $ denote the $\mathcal{L}$-theory consisting of $ \Tnalg $ together with Axiom schemes \ref{ax-RangeOfTerms} and \ref{ax-Bounds}.
\end{notation}

Next lemma shows that in $\Tnalg$ the composition of $\mathcal{L}$-terms is commutative up to a uniformly bounded error.

\begin{lem}\label{lma-Alg-composition}
 Given two  $ f $-polynomials $h_1(x)$ and $h_2(x)$, there exist a term $h(x)$ and $K_{h_1,h_2}\in \NN$ such that:
\[ \Tnalg \models \forall x\ \Big( (h_1(h_2(x))=h(x)+j_1) \wedge  (h_2(h_1(x))=h(x)+j_2)\Big), \]
where $j_1,j_2\in \ZZ$ are with an absolute value less than or equal to $K_{h_1,h_2}$. 
\end{lem}
\begin{proof}
 Let $ h_1(x)= \sum_{i=0}^{k}m_if^i(x) $ and $h_2(x)= \sum_{j=0}^{k'}n_jf^j(x)$. Using \eqref{eq-Alg-diff}, it is easy to see that we have 
 \[ h_1(h_2(x))=\sum_{\ell=0}^{kk'}q_\ell f^\ell(x)+j_1\ \text{ and } \  h_2(h_1(x))=\sum_{\ell=0}^{kk'}p_\ell f^\ell (x)+j_2,\]
 for some $j_1,j_2\in \ZZ$. Also, it is easy to see that for each $\ell\in\{0,\ldots,kk'\}$ we have that 
  \[ q_\ell=p_\ell=\sum_{i+j=\ell}m_in_j. \]
 We can use the numbers $K_{h_1}$ and $K_{h_2}$ obtained  for $h_1$ and $h_2$ in Lemma \ref{lma-Alg-diff-negative} to calculate a suitable upper bound for $j_1$ and $j_2$.
\end{proof}

\begin{lem}\label{lma-non-alg}
	Suppose that $ h(x) $ is an $f$-polynomial, and $ \theta(x;\bar{y}) $ is a non-algebraic formula of the form (\ref{eq-NonAlgOneVar}). Then, there are finitely many non-algebraic formulas $ \theta_1(x;\bar{y}),\ldots,\theta_n(x;\bar{y}) $, determined by the coefficients appearing in $ h(x) $ and $ \theta(x;\bar{y}) $, such that: 
	\begin{align}\label{eq-Alg-Disjunction}
\Ta\models 		\forall x\forall \bar{y}\left(\theta\big(h(x);\bar{y}\big)\leftrightarrow\bigvee_{i=0}^n\theta_i(x;\bar{y})\right).
	\end{align}
\end{lem}
\begin{proof}
	Using Notation \ref{not-DecimlFormulas} and the discussion after Definition \ref{dfn-NonAlgFormula}, the proof simply follows from the fact that the following formula is derivable from Axiom \ref{axBasicDLO}:
    \[ \bigvee_{\ell=0}^{K} \left(\fp{\alpha\sum_{i=0}^{k}m_if^i(x)} = \sum_{i=0}^{k}m_i\fp{\alpha f^i(x)}+\ell\right), \]
    where $K=\sum_{i=0}^{k}|m_i|$.    	
\end{proof}
\begin{cor}\label{crl-non-alg}
		Suppose that $ \theta(x_1,\ldots,x_m;\bar{y}) $ is a non-algebraic formula of the form (\ref{eq-NonAlg}), and $ h_1(\bar{x}),\cdots, h_m(\bar{x}) $ are terms in the language, where $|\bar{x}|=m$. Then, there are finitely many non-algebraic formulas $ \theta_1(\bar{x};\bar{y}),\ldots,\theta_n(\bar{x};\bar{y}) $, determined by the terms $ h_i(\bar{x}) $ and $ \theta(\bar{x};\bar{y}) $, such that: 
	\begin{align}\label{eq-Alg-Disjunction}
		\Ta\models 		\forall \bar{x}\forall\bar{y}\left(\theta\Big(h_1(\bar{x}),\ldots,h_n(\bar{x});\bar{y}\Big)\leftrightarrow\bigvee_{i=0}^n\theta_i(\bar{x};\bar{y})\right).
	\end{align}
\end{cor}
\begin{proof}
	Using Lemma \ref{lma-Alg-diff-negative}, for each $h_i(\bar{x})$, there are finitely many single-variable terms $h'_{i,j}(x_j)$ and $K_i\in\NN$ such that we have 
	\[ \bigvee_{\ell=-K_i}^{K_i} h_i(x_1,\ldots,x_m) = h'_{i,1}(x_1)+\ldots+h'_{i,m}(x_m)+\ell. \]
	 Now, we can treat each decimal of the form $\fp{\alpha h(x)}$ as in the proof of Lemma \ref{lma-non-alg}.
\end{proof}

Lemmas \ref{lma-Alg-technical}, \ref{lma-Alg-v1}, and \ref{lma-Alg} form our main technical steps towards solving systems involving more than one variable and consisting of at least one algebraic formula.
Lemma \ref{lma-Alg-technical} shows how a finite set of non-algebraic formulas constrained with a single algebraic formula can turn into a non-algebraic system of a smaller number of variables (which was already handled in Section \ref{secNonAlg}). This is actually done at the expense of adding an extra variable, $ w $ in the lemma. This new variable is forced to belong to the preimage of a new term $ h(x) $, and to be obtained using the variables already available in the system.
\par 

The proof we have provided for Lemma \ref{lma-Alg-technical}  
 is only a careful elaboration of a simple idea.
One can initially examine the case of $n=2$ in a model $\mathcal{M}$ of the theory, where the  idea would be as follows: Let $\bar{b},c\in M$, and assume that a finite set of non-algebraic formulas $ \Gamma(x_1,x_2;\bar{b}) $ is given
alongside with an algebraic equation $h_1(x_1)+h_2(x_2)=c$. We find a suitable term $h(x)$ and elements $a$ and $c'$ so as to turn the latter equation into a new one:  $h(x_1+x_2)=c'$, and then into the simpler equation: $x_1+x_2=a$. Finally we replace $x_2$ with
$a-x_1$ in $\Gamma$, which leads to having a totally non-algebraic system with a single variable. As can be seen in the proof, the new elements $a$ and $c'$ as well as the bounds of their whereabouts is uniformly obtained within the theory, and hence it does not depend on the given model $\mathcal{M}$. 

Before we proceed with Lemma \ref{lma-Alg-technical}, we need to introduce an auxiliary notation which will be followed by an easy lemma:


\begin{notation}\label{not-auxil}
	Let $ h_1(x),\ldots, h_n(x) $ be $f$-polynomials. For each $ i\in\{1,\ldots,n\} $ the term $ h^{\ast}_i(x) $ denotes 
	the successive composition of all the terms in the set $ \{h_j(x):j\neq i\} $. For example, $ h^{\ast}_2(x) $ denotes 
	\[ h_1(h_{3}(h_{4}(\cdots(h_n(x))\cdots))).\]
\end{notation}

\begin{lem}\label{lma-Alg-composition-star}
	Suppose that $ h_1(x),\ldots, h_n(x) $ are $f$-polynomials. Then, there exist an $f$-polynomial $ h(x) $ and  $ K=K_{(h_1,\ldots,h_n)}\in \NN $, depending on coefficients appearing in $ h_1(x),\ldots, h_n(x) $, such that 
	\begin{align*}
		\Tnalg\models	\forall x\ 	\bigvee_{\bar{j}\in \mathbf{K}}\hspace*{4pt} \bigwedge_{i=1}^{n}\Big(h_i(h^{\ast}_i(x))=h(x)+j_i\Big),
	\end{align*}
	where $ \bar{j}=(j_1,\ldots,j_n) $ and $  \mathbf{K}=\{0,\ldots,K\}^n $.
\end{lem}
\begin{proof}
	Similar to the proof of Lemma \ref{lma-Alg-composition}.
\end{proof}
In the following lemma, with $\bigwedge \Gamma$, for a set of formulas $\Gamma$, we mean the conjunction of all formulas  in $\Gamma$.
\begin{lem}[Technical Trick]\label{lma-Alg-technical}
	Let $ n\in\NN $, and suppose that $ \Gamma(\bar{x};\bar{y}) $ with \mbox{$ |\bar{x}|=n $} is a finite set of non-algebraic formulas, and $ h_1(x_1),\ldots,h_n(x_n) $ are given $f$-polynomials. Then, there exist an $f$-polynomial  $ h(x) $, a number $N_h\in\NN$, and finitely many sets of non-algebraic formulas $ \Gamma_1(x_1,\ldots,x_{n-1};z,\bar{y}),\ldots, \Gamma_m(x_1,\ldots,x_{n-1};z,\bar{y}) $ such that  
	\begin{align*}
	\Ta\models 	\forall\bar{y}\forall z\exists w\bigvee_{J=-N_h}^{N_h}\bigg(h(w)=z+J\hspace*{3pt}\wedge\hspace*{3pt}
		\Big(\exists \bar{x}\psi(\bar{x};\bar{y},z)\leftrightarrow\exists \bar{x}'\bigvee_{i=0}^m\left(\bigwedge\Gamma_i(\bar{x}';\bar{y},w)\right)\Big)\bigg),
	\end{align*}
	where the symbol $\bar{x}'$ abbreviates the sequence of variables $x_1,\ldots,x_{n-1}$, and $ \psi(\bar{x};\bar{y},z) $ is the following conjunction of formulas:
	
		\begin{align}\label{eq-Alg-tech}
		\bigwedge\Gamma(x_1,\ldots,x_n;\bar{y})\ \wedge \ \sum_{i=1}^n h_i(x_i) = z.
	\end{align}
\end{lem}
\begin{proof}
	For each $i\in\{1,\ldots, n\}$, let $h^{\ast}_i$ be as in Notation \ref{not-auxil}, and $h(x)$ the $f$-polynomial obtained for $h_1,\ldots, h_n$ by Lemma \ref{lma-Alg-composition-star}. Also, let $N_h\in\NN$ denote the number that exists for $h(x)$ by Axiom Scheme \ref{ax-RangeOfTerms}. This axiom scheme also implies the existence of  $j\in\ZZ$ with $|j|\leq N_h$ such that $z+j$ belongs to the range of $h$. In other words, there is $w$ such that $z=h(w)-j$.
	
	Using a similar argument for $f$-polynomials $h^*_i$, for each $i\in\{1,\ldots, n\}$ there are $j^*_i\in\ZZ$ and $x'_i$ with $|j^*_i|\leq N_{h^*_i}$ such that 
	$x_i=h^*_i(x'_i)-j^*_i$. This turns the algebraic part of \eqref{eq-Alg-tech} into:
	\[ \sum_{i=1}^{n}h_i\Big(h^*_i(x'_i)-j^*_i\Big) = h(w)-j. \] 
	
	Using Lemmas \ref{lma-Alg-diff-negative} and \ref{lma-Alg-composition-star}, there are $j_1,\ldots,j_n\in\ZZ$ such that the left-hand side of the equation above turns into:
	\[ \sum_{i=1}^{n}\left(h(x_i)-h_i(j^*_i)+j_i\right). \]
	Definition \ref{dfn-NonAlgFormula} and the proof of Lemma \ref{lma-non-alg} show that each $j_i$ is determined by finitely many non-algebraic formulas, say $\Delta^*_i(x_i)$. Additionally, for each $i\in\{1,\ldots, n\}$ we have $|j_i|\leq K'$ for some $K'\in\NN$ obtained based on the numbers $K_{h_i}$ and $K$ appearing in Lemmas \ref{lma-Alg-diff-negative} and \ref{lma-Alg-composition-star} respectively.
	
	All these turn the initial equation into: 
	\[ h(x'_i+\cdots+x'_n) - h(w) = J,\]
	for some $J\in\ZZ$. The proof of Corollary \ref{crl-non-alg} shows that the exact value of $J$ is determined by finitely many non-algebraic formulas; we denote it by $\Delta(x'_1,\ldots,x'_n)$.
	
	By Axiom Scheme \ref{ax-Bounds}, for some $J'\in\ZZ$ we have that 
	 $x'_1+\cdots+x'_n-w=J'$,
	or 
	\[ x'_n = w+J'-\sum_{i=1}^{n-1}x'_i. \]

Thus replacing $x'_n$ and using Corollary \ref{crl-non-alg}, the conjunction of formulas in $\Delta(x'_1,\ldots,x'_n)$ will be equivalent to a disjunction of finitely many non-algebraic formulas $\Delta'(x_1,\ldots,x'_{n-1},w)$. Similarly, the conjunction of formulas in $\Delta^*_n(x_n)$ introduced above is equivalent to the disjunction of a finite set of non-algebraic formulas which we denote by  $\Delta^*(x_1,\ldots,x'_{n-1},w)$.

Since for each $i\in\{1,\ldots, n\}$ we have $x_i=h^*_i(x'_i)-j^*_i$, we are able to replace each $x_i$ in $\Gamma(x_1,\ldots,x_n;\bar{y})$ accordingly. Hence, by Corollary \ref{crl-non-alg}, each non-algebraic formula in $\Gamma(x_1,\ldots,x_n;\bar{y})$ will be equivalent to a finite disjunction of non-algebraic formulas $\Gamma'(x'_1,\ldots,x'_n;\bar{y})$. By replacing $x'_n$ and using Corollary \ref{crl-non-alg} once again, each formula in $\Gamma'(x'_1,\ldots,x'_n;\bar{y})$ will be equivalent to the disjunction of a set of non-algebraic formulas which we denote by $\Gamma''(x_1,\ldots,x_{n-1},w;\bar{y})$. 

Finally, the following union is the  finite set of non-algebraic formulas required by the lemma:

\[ \bigcup_{i=1}^{n-1}\Delta^*_i(x_i) \cup \Delta'(x_1,\ldots,x'_{n-1},w)\cup \Delta^*(x_1,\ldots,x'_{n-1},w) \cup \Gamma''(x_1,\ldots,x_{n-1},w;\bar{y}).  \]

\end{proof}

The following lemma considers situations where a model contains the solutions of an algebraic formula with a single variable and with parameters from a smaller model. We see that, the smaller model always contains solutions for another formula which is approximately the same as the original one. This surprisingly leads to finding the exact solutions of the original formula already inside the smaller model.  

\begin{lem}\label{lma-Alg-v1}
	Let $\mathcal{M}_1\subseteq \mathcal{M}_2$ be models of 
	$\Ta$. Also, suppose that $ b\in M_1 $ and $a\in M_2$ are such that $h(a)=b$ for some  $f$-polynomial $h(x)$. Then, the element $a$ belongs to $ M_1 $. 
\end{lem}
\begin{proof} 
	By Axiom scheme \ref{ax-RangeOfTerms}, there exist an integer $ j\leq N_h $ and an element $ a'\in M_1 $ such that we have $h(a')=c+j$ in $ \mathcal{M}_1 $. Let $\ell = N_h$ and use Axiom scheme \ref{ax-Bounds} to conclude that $ |a-a'|\leq\ell' $ in $ \mathcal{M}_2 $. Now, the axioms of $ T_0 $ ensure that $ a $ is the $ \ell' $-th successor/predecessor of $ a' $, and this means that $ a $ is already a member of $ M_1 $.
\end{proof}

We combine all the previous results to prove Lemma \ref{lma-Alg}. As we see in the proof of the Main Theorem, the following lemma is in fact the final main step in showing that $\Ta$ is model-complete.


\begin{lem}\label{lma-Alg}
	Suppose that $\mathcal{M}_1\subseteq \mathcal{M}_2$ are models of $\Ta$.
	Also, suppose that $ \bar{b},c\in M_1 $, that $ \Gamma(\bar{x};\bar{b}) $ with $ |\bar{x}|=n $ is a finite set of non-algebraic formulas, and that $ h_1(x_1),\ldots,h_n(x_n) $ are given $f$-polynomials. 
	Then	 the following set of formulas is satisfiable in $ \mathcal{M}_2 $ if and only if it is satisfiable in $ \mathcal{M}_1 $:
	\begin{align}\label{eq-Alg-main-lma}
		\Gamma(x_1,\ldots,x_n;\bar{b})\cup
		\Big\{h_1(x_1)+\ldots+ h_n(x_n)=c\Big\}.
	\end{align}
\end{lem}
\begin{proof}
		By applying Lemma \ref{lma-Alg-technical} for $ \mathcal{M}_2 $, there are $ a\in M_2 $, an $f$-polynomial $ h(x) $, an element $ J\in \ZZ $ with $ h(a)=c+J $, and finitely many sets of non-algebraic formulas 
		$ \Gamma_1(x_1,\ldots,x_{n-1};a,\bar{b}),\ldots, \Gamma_m(x_1,\ldots,x_{n-1};a,\bar{b}) $
		such that satisfiability of the set of formulas (\ref{eq-Alg-main-lma}) in $ \mathcal{M}_2 $ is equivalent to the truth of a disjunctive formula of the form below:
		\[ \bigvee_{i=1}^{m}\left(\bigwedge\Gamma_i(\bar{x}';\bar{y},w)\right). \]
		 Assuming (\ref{eq-Alg-main-lma}) is satisfiable in $ \mathcal{M}_2 $, at least one of the mentioned sets, say $ \Gamma_i(x_1,\ldots,x_{n-1};a,\bar{b}) $, is satisfiable in $ \mathcal{M}_2 $.
	
	Since $ c+J $ is an element of $ \mathcal{M}_1 $, we can use Lemma \ref{lma-Alg-v1} to conclude that the element $ a $ is also a member of $ \mathcal{M}_1 $. Hence, by Theorem \ref{thm-NAlgModelcomplete}, the set of formulas (\ref{eq-Alg-main-lma}) is also satisfiable in $ \mathcal{M}_1 $.
\end{proof}

\vspace*{5pt}
\begin{mthm}
\label{thmMain}
Suppose that $\alpha$ is a  transcendental real number. Then, $ \Ta $ is a complete and model-complete axiomatization for the structure $ \Za $, which has the strict order property. Moreover, $ \Ta $ is decidable whenever $\alpha$ is computable. 
\end{mthm}
\begin{proof}
Based on Corollary \ref{crl-non-alg}, we can assume that the non-algebraic formulas $\Gamma(\bar{x};\bar{y})$ appearing in Lemma \ref{lma-Alg} are in the most general possible form. That is, we do not need to consider sets of non-algebraic formulas of the form $\Gamma\big(h(x_1,\ldots,x_n);\bar{y}\big)$, $\Gamma\big(h_1(x_1),\ldots,h_n(x_n);\bar{y}\big)$, or the like.

Also, using Lemma \ref{lma-Alg-diff-negative} and similar to the proof of Corollary \ref{crl-non-alg}, any term of the language can be replaced by an addition of single-variable terms at the expense of adding finitely many non-algebraic formulas.   Hence, the most general form of a quantifier-free $\mathcal{L}$-formula can be considered as the conjunction of a finite set of non-algebraic formulas $\Gamma(x_1,\ldots,x_n;\bar{b})$ with finitely many algebraic formulas of the form $h_1(x_1)+\ldots+ h_n(x_n)=z$.

Moreover, Lemma \ref{lma-Alg-technical} shows that the existence of one algebraic equation has two important effects: Firstly, it reduces the number of free variables $x_1,\ldots, x_n$ required to solve the formula in the smaller model. Secondly, it turns the whole formula into a disjunction of conjunctions of non-algebraic formulas, which of course contain an extra variable $w$. The proof of Lemma \ref{lma-Alg} shows how to handle this extra variable. Therefore, Lemma \ref{lma-Alg} suffices for model-completeness, and there is no need to enrich \eqref{eq-Alg-main-lma} with more algebraic formulas.

 As mentioned in part (1) of Remark \ref{rem-prime-computability-cut}, Axiom scheme \ref{ax-cut} guarantees that $\Za$ is the prime model of $\Ta$, and this proves the completeness of this theory. 

It is obvious that Axiom \ref{axBasicDLO} and Axiom scheme \ref{axKronecker} are recursively enumerable even if $\alpha$ is not computable. As addressed in Remark \ref{rem-computablity-non-alg}, the computability of $\alpha$ is not required for listing the instances of Axiom scheme \ref{axNonAlg} either. But, as mentioned in part (2) of Remark \ref{rem-prime-computability-cut} and also in Remark \ref{rem-computability-h}, the computability of $\alpha$ is needed and in fact suffices for Axiom schemes \ref{ax-cut}, \ref{ax-RangeOfTerms}, and \ref{ax-Bounds} to be recursively enumerable.

The strict order property is implied by the fact that the dense linear order relation ``$ \fp{\alpha x}<\fp{\alpha y} $'' is definable in $ \Ta $ .
\end{proof}
\vspace*{5pt}

\begin{rem}
By part (1) of Remark \ref{rem-prime-computability-cut}, a direct consequence of Axiom scheme \ref{ax-cut} is that the first-order theory of structures $\Za$ and $\Za[\beta]$ are not elementarily equivalent for different numbers $\alpha $ and $\beta$. In view of the results appearing in \cite{Hieronymi-SturmianWords}, part of the following common theory is reflected in our axioms:
 \[ \bigcap_{\alpha\in\QQc}\Ta.  \]
\end{rem}

\begin{rem}\label{rem-QE}
	Simpler forms of the techniques used in this paper are applied, among others, 
	in \cite{af-moh} to eliminate quantifiers for 
	$ \Ta $ when $ \alpha $ is the golden ratio. In this case, the specific formula that expresses being in the range of $ f $, namely $ \exists y f(y)=x $, is equivalent to the quantifier-free formula $ x=f(f(x)-x+1) $. 
	This is mainly due to the fact that for every  $ x\in \ZZ $ we have $f^2(x)=f(x)+x-1$, which in turn holds because of the algebraic dependence $\alpha^2=\alpha+1$.
 	There does not seem to exist an easy way to eliminate quantifiers even for such a simple formula in the case of a transcendental $ \alpha $.
\end{rem}

\begin{rem}\label{rem-EffectiveModelCompleteness}
	Our proofs in this paper can be checked to show also an \textit{effective} model-completeness result for the theory of $ \Za $. In fact, based on the proofs appeared in Section \ref{secNonAlg}, one can use the effective quantifier elimination available in the ordered field of reals to effectively obtain an equivalent quantifier-free formula for any formula of the form $ \exists\bar{x}\theta(\bar{x};\bar{y}) $ where $ \theta(\bar{x};\bar{y}) $ is non-algebraic. Using formula (\ref{eq-Alg-diff}) and Axiom \ref{ax-Bounds}, one can effectively find a universal formula equivalent to $ \exists x \big(h(x)=y\big) $ for an $f$-polynomial $ h(x) $. For an example, when $ \alpha $ equals Euler's number $ e $ the formula $ \exists x \big(f(x)=y\big) $ is equivalent to 
	\[ \forall x\Big(y-1\neq f(x)\wedge y+1\neq f(x)\Big). \]
	For systems involving more than one existential variables and 	containing an algebraic formula, one can apply the proof of Lemma \ref{lma-Alg-technical} to effectively reduce the number of existential variables.
\end{rem}

\section{Concluding remarks}\label{secConclude}

\subsection{The case of an algebraic $\alpha$}

The techniques used in this paper can be applied to obtain the same result for an algebraic $ \alpha $. In fact, when $ \alpha $ satisfies an equation of a minimal degree like
\begin{align}\label{eq-alphaAlg}
	\alpha^n+k_{n-1}\alpha^{n-1}+\ldots+k_0=0,
\end{align}
with integer coefficients, we can use (\ref{eq-alphaAlg}) to calculate the value of a decimal $ \fp{\alpha f^m(x)} $, with $ m\geq n $, in terms of the decimals
\[ \fp{\alpha f(x)}, \ldots, \fp{\alpha f^{n-1}(x)}. \]
At the same time, each  $f$-polynomial $ h(x) $ can be assumed to contain  powers of $ f $ strictly less than $ n $. Having made the mentioned adjustments, the rest of the argument can easily be carried out.

\subsection{On definable sets}
\label{subdefinable}
Based on the terminology used in \cite[Section 3.1]{Tao-StructureAndRandomness}, there appear three fundamental types of sets in various areas of mathematics: The ``structured'' sets, the ``random'' sets, and sets of a ``hybrid'' nature. Below, we make a concise clarification on this phenomena concerning the definable sets in $ \Za $. 

If a power of $ f $ does not appear in an existential formula $ \varphi(x) $ with a single free variable $ x $, then the quantifier elimination available in Presburger arithmetic shows that $ \varphi(x) $ is actually describing a congruence class to which $ x $ belongs. So in this case, $ \varphi(x) $ defines an infinite arithmetic progression which is a typical example of a ``structured'' set by having a completely predictable behaviour. 

On the contrary, Connell proved in \cite[Theorem 2]{Connell-BeattySeqsII} that no Beatty sequence with an irrational modulo can contain an infinite arithmetic progression. That is the set of solutions of a formula like $ \exists y \big(x=f^n(y)\big) $, that forms a typical example of an existential formula containing a power of $ f $, cannot contain an infinite arithmetic progression. It is not clear to us if the same fact holds for formulas of the form $ \exists y \big(x=f(y) + f^2(y)\big) $ that contain an addition of terms; the latter question might be of interest from the perspective of additive combinatorics.

However in proposition below, and using Theorem \ref{Krock}, we prove that in the range of any term of one variable we can find finite arithmetic progressions of arbitrary large length. This slightly generalizes a similar result by Connell in \cite[Theorem 2]{Connell-BeattySeqsII}.

\begin{prop}
	Let $ h(x)=\sum_{i=0}^k m_if^i(x) $. For every $ n\in \NN $ there exists an arithmetic progression of length $ n $ in the range of $ h $ in $ \Za $.	
\end{prop}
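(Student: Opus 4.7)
The plan is to exhibit the desired arithmetic progression as the values $h(x_0), h(x_0+\Delta), \ldots, h(x_0+(n-1)\Delta)$ for a carefully chosen pair $(x_0,\Delta)\in\NN^2$. The guiding intuition is to pick $\Delta$ ``slow enough'', in the sense that every decimal $\fp{\alpha f^i(\Delta)}$ is tiny, so that no carry ever appears when $f$ is iterated along the arithmetic progression $x_0,\,x_0+\Delta,\,\ldots,\,x_0+(n-1)\Delta$; the sequence of values will then coincide with its formal linear expansion and the arithmetic progression will fall out for free.

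To locate such $x_0$ and $\Delta$ I would appeal twice to the extended Kronecker lemma (Theorem~2.4). First, I would choose $\Delta\in\NN$ with
\[
\fp{\alpha f^i(\Delta)} \;<\; \frac{1}{2(n-1)}\qquad\text{for every } i = 0, 1, \ldots, k-1,
\]
and then, independently, choose $x_0\in\NN$ with
\[
\fp{\alpha f^i(x_0)} \;<\; \frac{1}{2}\qquad\text{for every } i = 0, 1, \ldots, k-1.
\]
Both selections are legitimate because Theorem~2.4 gives the density of the tuple $\bigl(\fp{\alpha x},\fp{\alpha f(x)},\ldots,\fp{\alpha f^{k-1}(x)}\bigr)$ in $(0,1)^{k}$ as $x$ runs over $\NN$.

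The heart of the argument would then be an induction on $i$ establishing
\[
f^i(x_0+j\Delta) \;=\; f^i(x_0) + j\, f^i(\Delta),\qquad 0\leq i\leq k,\ 0\leq j\leq n-1.
\]
The base case $i=0$ is tautological. For the inductive step, writing $a=f^i(x_0)$ and $b=f^i(\Delta)$, the equality $f(a+jb)=f(a)+jf(b)$ follows by iterated application of Lemma~\ref{first-non-algebraic-formulas}(1) as soon as $\fp{\alpha a}+j\fp{\alpha b}<1$, and our choices guarantee
\[
\fp{\alpha f^i(x_0)} + j\,\fp{\alpha f^i(\Delta)} \;<\; \tfrac{1}{2} + (n-1)\cdot\tfrac{1}{2(n-1)} \;=\; 1,
\]
which is precisely the threshold required. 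Summing the resulting identities with weights $m_i$ gives $h(x_0+j\Delta)=h(x_0)+j\,h(\Delta)$ for $j=0,1,\ldots,n-1$, producing an arithmetic progression of length $n$ in the range of $h$ with common difference $h(\Delta)$; nondegeneracy $h(\Delta)\ne 0$ can be arranged by a harmless perturbation of $\Delta$ that respects the smallness constraints on $\fp{\alpha f^i(\Delta)}$.

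The principal delicacy, and the only real obstacle, is the \emph{simultaneous} control of the decimals $\fp{\alpha f^i(\Delta)}$ and $\fp{\alpha f^i(x_0)}$ across all $i\leq k-1$; but this is exactly the content of the extended Kronecker lemma, so the plan proceeds without substantial difficulty.
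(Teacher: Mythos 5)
Your proposal is correct and takes essentially the same route as the paper: both arguments reduce the claim to the no-carry conditions $\fp{\alpha f^i(x_0)}+j\fp{\alpha f^i(\Delta)}<1$ for $i\le k-1$ (so that $f^i(x_0+j\Delta)=f^i(x_0)+jf^i(\Delta)$ and hence $h(x_0+j\Delta)=h(x_0)+jh(\Delta)$), and both secure such $x_0,\Delta$ via the extended Kronecker lemma. Your explicit thresholds $\tfrac12$ and $\tfrac{1}{2(n-1)}$ are just a concrete instantiation of the paper's requirement $0<\fp{\alpha f^i(x)}+n\fp{\alpha f^i(y)}<1$, with the added (harmless) care about $h(\Delta)\neq 0$.
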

\begin{proof}
	To have an arithmetic progression of length $ n $, it suffices to find $ x $ and $ y $ such that for every $ \ell\leq n $ we have that $ h(x+\ell y)=h(x) + \ell h(y). $ And the latter holds whenever there are $ x,y\in \ZZ $ such that $ \Za $ satisfies the following non-algebraic formula for every $ 1\leq i\leq k $
	\[ f^i(x+ny)=f^i(x)+nf^i(y), \]
	or equivalently whenever we have the following for every $ 0\leq i\leq k-1 $ 
	\[ \Za\models 0<\fp{\alpha f^i(x)}+n \fp{\alpha f^i(y)}<1. \]
	But, Theorem \ref{Krock} allows us to find $ x $ and $ y $ with the desired properties.
\end{proof}
A similar argument as in the proof of proposition above shows that for an existential formula $ \varphi(x) $ of more than one existential variable, the set of solutions $ \varphi(\Za) $ contains arithmetic progressions of arbitrary finite lengths.

The latter observation shows that the formulas containing a power of $ f $ behaves more-or-less similar to prime numbers in that they do not contain infinite arithmetic progression whereas they do contain arbitrary long finite arithmetic progressions (\cite{GreenTao-primesArithProg}). However, Proposition \ref{prop-Idea} shows that such definable sets may differ from the primes in intersecting each congruence class at infinitely many points. But it seems reasonable to consider them as hybrid sets in $ \Za $ just like as we do for primes in $ \ZZ $. 

To sum up, the structured definable sets in $ \Za $ are disjoint-by-finite from the hybrid sets, and still another interesting phenomena occurs in $ \Za $ when we consider two mentioned types of sets from the perspective of the order topology available in $ \Za $ by the formula $ \fp{\alpha x} < \fp{\alpha y} $. In fact, both the structured and hybrid sets find a uniform description in this topology by being simultaneously dense and co-dense there.

\subsection{A connection to o-minimality}\label{subsecOMin}

We show that the non-algebraic part of $ \Ta $, which we have denoted by $ \mathcal{T}_{\nalg} $ in Section \ref{secNonAlg}, gives rise to an o-minimal theory that embodies its main features.

First for a model $ \mathcal{M}\models\mathcal{T}_{\nalg} $ we associate a structure $ \mathcal{A}_\mathcal{M} $ in a language $ \mathcal{L}^* $ that contains a set of predicates meant to capture the non-algebraic content of $ \mathcal{M} $. 


So let $ \mathcal{L}^*=\Big\{<, P_{\bar{m},\bar{n},\ell}\Big\}_{\bar{m},\bar{n},\ell\in\ZZ}$ where each $ P_{\bar{m},\bar{n},\ell} $ accepts tuples of arity $ |\bar{m}|+|\bar{n}| $. 
Fix $ \mathcal{M}\models \mathcal{T}_{\nalg} $ and let $ A_\mathcal{M}  $ be the subset of (possibly non-standard) reals defined as
\[ A_\mathcal{M}:=\Big\{\fp{\alpha a}\big|a\in M\Big\}. \]
%
For $ \fp{\alpha a_1},\ldots,\fp{\alpha b_1},\ldots\in A_\mathcal{M} $, we let $ P_{\bar{m},\bar{n},\ell}(\fp{\alpha a_1},\ldots,\fp{\alpha b_1},\ldots) $ hold in $ \mathcal{A}_\mathcal{M} $ if and only if

\begin{align}\label{eq-omin}
	\mathcal{M}\models \sum m_i\fp{\alpha a_i} <\sum n_i\fp{\alpha b_i}+\ell.
\end{align}

Note in particular that $ P_{\bar{1},\bar{1},0}(\fp{\alpha a},\fp{\alpha b}) $ holds in $ \mathcal{A}_\mathcal{M} $ if and only if 
\begin{align*}
	\mathcal{M}\models \fp{\alpha a}<\fp{\alpha b}.
\end{align*}
That is, $  P_{\bar{1},\bar{1},0} $ coincides with the relation $ < $ in $ \mathcal{A}_\mathcal{M} $. Hence by Axiom 1 this predicate defines a dense linear ordering on $ \mathcal{A}_\mathcal{M} $. 

Towards introducing $ T^* $, we keep using the notation $ \fp{\alpha x} $ for elements of an arbitrary $ \mathcal{L}^* $-structure $ \mathcal{A} $. Also, for simplicity and particularly in axiom schemes (2) and (3) below, we keep thinking of predicates $ P_{\bar{m},\bar{n},l} $ as if they are reflecting the content of the inequality appeared in \eqref{eq-omin}, while we carefully have this reservation in mind that an expression like $ \sum m_i\fp{\alpha a_i} $ is, by itself, just meaningless in $ T^* $ and does not refer to an actual point.

Let $T^*$ be the theory that
describes the following:
\begin{enumerate}

	\item[(1)] 
	The relation $<$ is a dense linear order.

	\item[(2)] 
	The predicates 
	$P_{\bar{m},\bar{n},\ell}$ 
	are consistent
	with 
	the usual addition and ordering of real numbers.
	That is $T^*$ describes how
	elements can be moved from 
	each side of \eqref{eq-omin}
	to the other.
	For example if $ P_{\overline{2},\bar{1},0}(a,c) $ holds, then we have that $ P_{\overline{1},\overline{1,-1},0}(a,c,a) $. This example reflects the content of the fact that $2[\alpha a]<[\alpha c]$ implies $[\alpha a]<[\alpha c]-[\alpha a]$ in real numbers.
	
	\item[(3)]
	If  $\sum m_i\fp{\alpha a_i} <\sum n_i\fp{\alpha b_i}+\ell<1$ then 
	there is $[\alpha x]$ such that
	\[
	\sum m_i\fp{\alpha a_i} <[\alpha x]<\sum n_i\fp{\alpha b_i}+\ell. 
	\]
\end{enumerate}

Because of the density enforced on the predicates of $ \mathcal{L}^* $ by the axioms (1) and (3) above, it is easy to verify the following proposition.

\begin{prop}
	$T^*$ admits quantifier elimination in $ \mathcal{L}^* $.
\end{prop}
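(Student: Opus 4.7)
The plan is to establish quantifier elimination via the standard one-step back-and-forth criterion. Assume $\mathcal{A}_1, \mathcal{A}_2 \models T^*$ are sufficiently saturated, that $\phi$ is a partial isomorphism between finitely generated substructures, and that $c \in \mathcal{A}_1$ extends the domain of $\phi$; we must produce a matching $c' \in \mathcal{A}_2$ such that sending $c \mapsto c'$ extends $\phi$. By a standard argument this suffices for QE, since the atomic formulas of $\mathcal{L}^*$ are exactly the predicates $P_{\bar{m},\bar{n},\ell}$ (with $<$ arising as the special case $P_{(1),(1),0}$).

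The central step is to describe, and then realize, the quantifier-free type of $c$ over the parameters $\fp{\alpha a_1},\ldots,\fp{\alpha a_k}$. Each atomic formula satisfied by $c$ asserts an inequality of the form
\[
\sum m_i \fp{\alpha a_i} \;<\; m\, c \;+\; \sum n_j \fp{\alpha a_j} + \ell,
\]
or its negation, and axiom (2) lets us rearrange every such expression into a canonical one-sided bound on $m\, c$ alone. Grouping these bounds by the multiplier $m$, the type reduces to specifying, for each positive integer $m$, an ``interval'' in which $m\, c$ must lie. Because $\phi$ preserves all atomic relations among the parameters, the transported system over $\phi(\bar a)$ is consistent in the same formal sense in $\mathcal{A}_2$; by saturation it suffices to realize any finite sub-system. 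Collapsing the finitely many strict bounds via axioms (1) and (2) into a single two-sided inequality of the shape $L(\phi(\bar a)) < c' < R(\phi(\bar a))$, axiom (3) then supplies the required witness $c'$.

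The main obstacle I anticipate is handling the boundary cases attached to axiom (3), which demands that the upper bound be strictly less than $1$. For types that are cofinal or coinitial in $(0,1)$ one must either appeal directly to the saturation of $\mathcal{A}_2$ (extracting the witness by compactness from the finite satisfiability of the relevant partial type), or introduce the canonical element $\fp{\alpha\cdot 1}$ as a parameter and replay the density axiom with it to handle the $(0,1)$-boundary in a uniform way. A second subtlety is the degenerate case in which the cut in question collapses to equality with a $\mathbb{Z}$-linear combination of parameters; here the witness is forced by $\phi$ itself and no density is needed. Dispatching these two cases completes the back-and-forth step, and the proposition follows.
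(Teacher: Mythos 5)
Your overall strategy---turning the paper's one-line appeal to density into a saturated back-and-forth on quantifier-free types---is in the same spirit as the paper, which offers no more than ``it is easy to verify because of the density enforced by axioms (1) and (3)''. However, there is a concrete gap at your collapsing step. An atomic formula may contain the new element $c$ with net integer coefficient $m\neq 1$ (a predicate $P_{\bar m,\bar n,\ell}$ in which the slot of $c$ carries coefficient $m$, or in which $c$ occupies several slots). Axiom (2) only licenses moving summands from one side of the inequality to the other; since $\mathcal{L}^*$ has no rational coefficients, a bound of the form $L(\bar a)<m\,c$ cannot be rewritten as a coefficient-one bound on $c$ whose endpoint is a term in the parameters, and bounds attached to different multipliers $m$ cannot be merged into a single interval $L(\phi(\bar a))<c'<R(\phi(\bar a))$. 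Axiom (3), as stated, produces a witness $\fp{\alpha x}$ occurring with coefficient one, so it does not realize even a single constraint $L<m\,c'<R$ with $m>1$, let alone several such constraints for different $m$ simultaneously; saturation does not help, because every finite subsystem still carries the coefficients $m$.

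A related problem is your ``degenerate'' case: the quantifier-free type of $c$ may pin $m\,c$ exactly to a parameter combination (neither $<$ nor $>$ holds), and for $m>1$ such a witness is not ``forced by $\phi$''. In the intended structures $\mathcal{A}_\mathcal{M}$ this equality holds precisely when $m$ divides the corresponding integer combination of the parameters' preimages, so it encodes divisibility data whose transfer to $\mathcal{A}_2$ is not guaranteed by axioms (1)--(3). To push your back-and-forth through one needs a stronger witness scheme---for every $m\geq 1$, existence of $x$ with $\sum m_i\fp{\alpha a_i}<m\fp{\alpha x}<\sum n_i\fp{\alpha b_i}+\ell$ whenever the endpoints are compatible---which is exactly what holds in the structures $\mathcal{A}_\mathcal{M}$ (there $m\fp{\alpha x}$ can be rewritten as $\fp{\alpha z}$ up to an integer, using the solvability axioms of $\mathcal{T}_{\nalg}$, as the paper notes right after the proposition), together with a separate treatment of the exact-equality constraints; alternatively, one restricts the claim to the completions of $T^*$ realized by the $\mathcal{A}_\mathcal{M}$. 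As written, your argument does not close this step, and it is precisely the point that the paper's one-sentence justification also leaves implicit.
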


Now, for some model $ \mathcal{M}\models\mathcal{T}_{\nalg} $ it is easy to see that the associated $ \mathcal{A}_\mathcal{M} $ is a model of $ T^* $. On the other hand, $T^*$ is \textit{similar to} an  o-minimal theory in the sense that
any set 
defined by a formula $\varphi(x,\bar{a},\bar{b})$
is a finite union of \textit{intervals}
of the form below
\[
\left\{x:\sum m_i\fp{\alpha a_i} <m\fp{\alpha x}<\sum n_i\fp{\alpha b_i}+\ell\right\}.
\]

But, as mentioned earlier, the endpoints of this \textit{interval} are not  actual points in an arbitrary model of $ T^* $. However, in each of the structures $ \mathcal{A}_\mathcal{M} $ these endpoints turn out to be elements of the form $ \fp{\alpha a} $. Moreover, at the expense of adding/subtracting an integer value to/from $ \ell $, we can write $ m\fp{\alpha x}$ as $ \fp{m\alpha x} $ or equivalently as $ \fp{\alpha z} $ for some $ z $ in $ \mathcal{M} $. That is, each $ \mathcal{L}^* $-formula  $\varphi(x,\bar{a},\bar{b})$ becomes equivalent to a finite disjunction of formulas of the form below in $ \mathcal{A}_\mathcal{M} $:
\[  \fp{\alpha a}< \fp{\alpha x}<\fp{\alpha b}. \]

Hence for any pair of models $ \mathcal{M}, \mathcal{N}\models\mathcal{T}_{\nalg} $ the two associated structures $ \mathcal{A}_{\mathcal{M}} $ and $ \mathcal{A}_{\mathcal{N}} $ are elementary equivalent since we are able to form a back-and-forth system between them. In other words, there exists a completion of $ T^* $ that is o-minimal and is determined by $ \mathcal{T}_{\nalg} $.

\vspace*{6pt}
We finish by posing the following question which is seemingly a natural continuation of the results appeared in this paper:

\vspace*{6pt}
\textbf{Question.} Is the structure $ \langle\ZZ,<,+,-,0,1,f\rangle $, which is, $ \Za $ augmented by the usual ordering of integers, decidable? Is it model-complete? Or, does it admit quantifier elimination in a naturally expanded language?

\subsection*{Acknowledgments}
We would like to thank Philip Hieronymi, for generously offering several questions 
to work on
(including this and a special case of it that the first and third author treated in their previous paper (\cite{af-moh}).
The second author was in part supported by a grant from IPM (No. 99030044). We are also thankful to the anonymous referee whose comments led to a much better version of this paper.


\normalsize


\begin{thebibliography}{[HD82]}




\normalsize
\baselineskip=17pt


\bibitem[B62]{Buchi-OnDecidable}
J. R. B\"{u}chi,
{On a decision method in restricted second order arithmetic},
In Logic, Methodology and Philosophy of Science (Proc. 1960 Internat. Congr.), 
Stanford Univ. Press, Stanford, Calif. (1962), 1--11. 
%

\bibitem[C18]{conant}
G. Conant,  There are no intermediate structures between the group of integers 
and Presburger arithmetic, The Journal of Symbolic Logic, 83(1) (2018), 187-207. 

\bibitem[CP18]{conant-pillay}
G. Conant  and A. Pillay,
Stable groups and expansions of $(\ZZ,+,0)$
 Fundam. Math. 242, No. 3 (2018), 267--279.

\bibitem[C60]{Connell-BeattySeqsII}
I.~G. Connell,
{Some properties of Beatty sequences II},
 Canadian Mathematical Bulletin, 3 (1960), no. 1, 17--22.

\bibitem[GT08]{GreenTao-primesArithProg}
B.~Green and T.~Tao,
{The primes contain arbitrarily long arithmetic progressions},
 Annals of Mathematics, 167 (2008), no. 2, 481--547.

\bibitem[GO22]{gunayden-ozsahakyan}
A. G\"{u}naydin and M. \"{O}zsahakyan,
{Expansions of the group of integers by Beatty sequences},
Annals of Pure and Applied Logic, 173 (2022), no. 3, 103062.

\bibitem[HW08]{Hardy}
G.~H. Hardy and E.~M. Wright, 
 { An introduction to the theory of numbers}, sixth ed.
 (Oxford University Press, Oxford,  2008)

\bibitem[H16]{Hieronymi-ExpansionByTwoDiscrete}
P.~Hieronymi,
{Expansions of the ordered additive group of real numbers by two
  discrete subgroups},
 The Journal of Symbolic Logic, 81(2016), no. 3, 1007--1027.

\bibitem[HM$^+$21]{Hieronymi-SturmianWords}
P.~Hieronymi, D.~Ma, R.~Oei, L.~Schaeffer, C.~Schulz, and J.~Shallit,
{Decidability for  Sturmian words},
arXiv:abs/2102.08207  (2021).

\bibitem[KS17]{kaplan}
I.~Kaplan and S.~Shelah,
Decidability and classification of the theory of integers with primes,
The Journal of Symbolic Logic. Cambridge University Press, 82(3) (2017), 1041–50.

\bibitem[KZ22]{af-moh}
M.~Khani and A.~Zarei, 
 The additive structure of integers with the lower Wythoff sequence,
 Arch. Math. Logic (2022). https://doi.org/10.1007/s00153-022-00846-2, 1--13.

\bibitem[T08]{Tao-StructureAndRandomness}
T.~Tao,
 {Structure and Randomness: Pages from Year One of a Mathematical
  Blog},
 American Mathematical Society, (2008).
\end{thebibliography}
\end{document}